\documentclass[12pt]{amsart} 
\usepackage{amsmath,amssymb,amsthm,fullpage,color}
\usepackage{graphicx}

\theoremstyle{plain}
\newtheorem{theorem}{Theorem}
\newtheorem{lemma}{Lemma}

\theoremstyle{definition}

\newcommand{\al}{\alpha}
\newcommand{\be}{\beta}
\newcommand{\ga}{\gamma}
\newcommand{\de}{\delta}

\newcommand{\eps}{\varepsilon}


\newcommand{\R}{\mathbb R}

\newcommand{\PP}{\mathcal P}


\newcommand{\brR}[1]{\!\left(#1\right)}			
\newcommand{\brS}[1]{\left[#1\right]}			
\newcommand{\brA}[1]{\left\langle#1\right\rangle}	

\newcommand{\set}[1]{\left\lbrace#1\right\rbrace}	
\newcommand{\Set}[1]{\Big\lbrace#1\Big\rbrace}		
\newcommand{\abs}[1]{\left|#1\right|}				

\newcommand{\conv}{{\rm conv}}				
\newcommand{\supp}{{\rm supp}}				
\newcommand{\spn}{{\rm span}}					

\newcommand{\dH}{{\rm d_\mathcal{H}}}			
\newcommand{\dBM}{{\rm d_\mathcal{BM}}}		


\newcommand{\hide}[1]{}

\begin{document}

\title
{\bf Stability of the reverse Blaschke-Santal\'o inequality for unconditional convex bodies}
\subjclass[2010]{52A20, 53A15, 52B10.}
 \keywords{convex bodies, polar bodies, unconditional convex bodies, Hanner polytopes, volume product, Mahler's conjecture, Blaschke-Santal\'o inequality.}
\author{Jaegil Kim and Artem Zvavitch}\thanks{The authors are supported in part by U.S.~National Science Foundation grant DMS-1101636.}
\address{Department of Mathematical Sciences, Kent State University,
Kent, OH 44242, USA} \email{jkim@math.kent.edu}   \email{zvavitch@math.kent.edu}

\date{}

\begin{abstract}
 Mahler's conjecture asks whether the cube is a minimizer for the volume product of a body and its polar in the class of symmetric convex bodies in $\R^n$. The corresponding inequality to the conjecture is sometimes called the the reverse Blaschke-Santal\'o inequality. The conjecture is known in $\R^2$ and in several special cases. In the class of unconditional convex bodies, Saint Raymond confirmed the conjecture, and Meyer and Reisner, independently, characterized the equality case. In this paper we present a stability version of these results and also show that any symmetric convex body, which is sufficiently close to an unconditional body, satisfies the the reverse Blaschke-Santal\'o inequality.

\end{abstract}

\maketitle

\section{Introduction}

As usual,   we denote by $\brA{x,y}$  the inner product of two vectors $x, y \in \R^n$ and by $|x|$ the length of a vector $x \in \R^n$. 
A {\it convex body} is a compact convex subset of $\R^n$ with non-empty interior.  We say that a set $K$ is {\it symmetric} if it is centrally symmetric with center at the origin, i.e. $K=-K$, or, equivalently, for every $x \in K$ we get $-x \in K$. A set $K\subset \R^n$ is said to be unconditional if it is symmetric with respect to any coordinate hyperplane, i.e., $(\pm x_1, \pm x_2, \dots, \pm x_n) \in K$, for any $x \in K$ and any choice of $\pm$ signs.

We write $|A|$ for the $k$-dimensional Lebesgue measure (volume)  of a measurable set $A \subset \R^n$, where $k=1,\dots, n$  is the dimension of the minimal flat containing $A$.  The   {\it polar}  body $K^\circ$ of a symmetric convex body $K$ is defined by 
$$
K^\circ = \set{y\in\R^n \big| \brA{x,y}\le 1 \mbox{\ for all\ } x\in K}.
$$ 
The \emph{volume product} of a symmetric convex body $K$ is defined by 
$$
\PP(K)=\abs{K}\abs{K^\circ}.
$$ 
We note that the notion of the volume product (as well as polarity) can be generalized to non-symmetric setting (see for example  \cite{Sc}, p. 419), but in this paper we will focus on the questions related to the centrally symmetric bodies. It turns out that the volume product is invariant under the polarity and for any invertible linear  transformation on $\R^n$,  that is, for any $T \in {\rm GL}(n)$, 
\begin{equation}\label{eq:affine_invariance}
\PP(TK)=\PP(K)\quad\text{and}\quad\PP(K^\circ)=\PP(K).
\end{equation}
The above property makes the {\it Banach-Mazur distance} between symmetric convex bodies $K$ and $L$
  $$\dBM(K,L) = \inf \Set{d\ge1 : L\subset TK\subset dL, \mbox{ for some } T\in{\rm GL}(n)},$$
 be an extremely useful in study of properties  of the volume product. For example, the  F. John's theorem \cite{Jn} and the continuity of the volume function with respect to the  Banach-Mazur distance  guarantee that the volume product attains its maximum and minimum. The maximum for the volume product is provided by the Blaschke-Santal\'o inequality: 
\begin{equation}
\PP(K) \le \PP(B^n_2),    \mbox{ for all symmetric convex bodies } K \subset \R^n,  
\end{equation}
where $B^n_2$ is the Euclidean unit ball. The equality in the above inequality holds only for ellipsoids (see \cite{S,P,MP} for a simple proof of both the inequality and the case of equality, or a precise statement of the inequality in non-symmetric case).  Recently, the stability versions of the Blaschke-Santal\'o inequality were studied in \cite{B, BBF}.

For the minimum of the volume product, it was conjectured by Mahler in \cite{Ma1,Ma2} that $\PP(K)$ is minimized at the cube in the class of symmetric convex bodies in $\R^n$. In other words, the conjecture asks whether the following inequality is true: 
\begin{equation}\label{eq:mah}
\PP(K) \ge \PP(B_\infty^n), \mbox{ for all symmetric convex bodies } K \subset \R^n,  
\end{equation}
 where $B_\infty^n$ is the unit cube. The case of $n=2$ was proved by Mahler \cite{Ma1}. It was also proved in several special cases, like, e.g., unconditional bodies \cite{SR,Me86,R87, BF}, zonoids \cite{R86,GMR, BH}, bodies of revolution \cite{MR98} and  bodies with some positive curvature assumption \cite{St, RSW, GM}. An isomorphic version of the conjectures was proved by Bourgain and Milman \cite{BM}: there is a universal constant $c>0$ such that $\PP(K) \ge c^n\PP(B^n_2)$; see also different proofs in \cite{Ku,Na,GPV}. Functional versions of Blaschke-Santal\'o inequality and Mahler's conjecture in terms of log-concave functions were investigated by Ball \cite{Ba}, Artstein, Klartag, Milman \cite{AKM}, and Fradelizi, Meyer \cite{FM1,FM2,FM3}. For more information on Mahler's conjecture, see expository articles \cite{T1,Mak,RZ}.
 
 The local minimality of the volume product was first studied in \cite{NPRZ} by proving that the cube is a strict local minimizer of the volume product in the class of symmetric convex bodies endowed with the Banach-Mazur distance (see \cite{KR} for the local minimality at the simplex in the non-symmetric setting).  The result was used by  B\"or\"oczky and Hug \cite{BH} to provide a stability version of (\ref{eq:mah}) for zonoids, namely,  a zonoid $Z$ is close in the Banach-Mazur distance to the cube whenever  $\PP(Z)$ is close to  $\PP(B_\infty^n)$.
 
 The main goal of this paper is to provide a stability version of (\ref{eq:mah}) for unconditional convex bodies. Before stating the theorem we need to remind the definition of a Hanner polytope \cite{Ha, HL}:  a symmetric convex body $H$ is called a {\it Hanner polytope} if $H$ is one-dimensional, or it is the $\ell_1$ or $\ell_\infty$ sum of two (lower dimensional) Hanner polytopes.  It can be calculated (see for example \cite{RZ})  that the volume product of the cube is the same as that of Hanner polytopes. Thus every Hanner polytope is also a candidate for a minimizer of the volume product among symmetric convex bodies. It was also shown in \cite{Me86, R87} that Hanner polytopes are the only possible minimizers  in the class of unconditional bodies.

 In Section 2 we will prove that if the volume product of an unconditional convex body is sufficiently close  to that of the cube, then the body must be close to a Hanner polytope:
 \begin{theorem}\label{thm:stability_abs_BM}
Let $K$ be an unconditional convex body in $\R^n$. If 
$$|\PP(K)-\PP(B_\infty^n)|\le\eps$$ for small $\eps>0$,
 then there exists a  Hanner polytope  $H\subset \R^n$ such that $$\dBM(K,H) \le 1+c(n)\eps,$$
 where $c(n)>0$ is a constant depending on $n$ only.
\end{theorem}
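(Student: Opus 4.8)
The plan is to argue by induction on the dimension $n$, upgrading Saint Raymond's inequality and the Meyer--Reisner description of its equality case to a quantitative estimate. Since $\PP$ and unconditionality are both invariant under coordinate-wise dilations, I would first normalize: after replacing $K$ by $DK$ for a suitable diagonal $D\in{\rm GL}(n)$, assume that $e_i\in\partial K$ for every $i$. For an unconditional body this is equivalent to $B_1^n\subseteq K\subseteq B_\infty^n$, and then $K^\circ$ is normalized in exactly the same way. This ``good position'' is rigid enough that, at the end of the argument, volume estimates can be converted into Hausdorff and then Banach--Mazur estimates with a loss depending only on $n$. The base case $n=1$ is trivial.

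For the inductive step, for each coordinate $i$ write $K_i=K\cap e_i^\perp$ — for an unconditional body this equals the coordinate projection of $K$ onto $e_i^\perp$, and $K^\circ\cap e_i^\perp=(K_i)^\circ$ — and encode $K$ by the fiber function $f_i\colon K_i\to[0,1]$, $f_i(y)=\max\{t\ge0:y+te_i\in K\}$, which is concave and even and satisfies $|K|=2\int_{K_i}f_i$; encode $K^\circ$ likewise by the fiber function $g_i$ of $K^\circ$ over $(K_i)^\circ$. Saint Raymond's inequality is, at bottom, the estimate $\PP(K)\ge\frac{4}{n}\PP(K_i)$ for an appropriate $i$, extracted from the interplay between the elementary inclusions $\conv(K_i,\pm e_i)\subseteq K\subseteq K_i\times[-e_i,e_i]$ and their polar duals; combined with the inductive bound $\PP(K_i)\ge\PP(B_\infty^{n-1})$ this yields (\ref{eq:mah}), and equality is possible only at Hanner polytopes. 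The crux of the proof is to make this quantitative: I would establish, for a well-chosen $i$, an inequality of the form
\begin{equation*}
\PP(K)\ \ge\ \PP(B_\infty^n)\ +\ \frac{1}{c(n)}\Big(\big(\PP(K_i)-\PP(B_\infty^{n-1})\big)+\Delta_i(K)\Big),
\end{equation*}
where $\Delta_i(K)\ge0$ is an explicit deficit measuring how far the pair $(f_i,g_i)$ is from the configurations that arise at Hanner polytopes — in which $K_i$ decomposes into coordinate blocks, with $f_i$ of cone type (affine) on one block and of prism type (constant) on the other, and dually for $g_i$. Such a decomposition is obtained by running Saint Raymond's chain of inequalities and recording the defect of each step as a separate nonnegative term.

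Granting this, the hypothesis $|\PP(K)-\PP(B_\infty^n)|\le\eps$ forces $\PP(K_i)-\PP(B_\infty^{n-1})\le c(n)\eps$ and $\Delta_i(K)\le c(n)\eps$. The first, together with the inductive hypothesis applied to a renormalization of the unconditional body $K_i\subseteq\R^{n-1}$, produces a Hanner polytope $H_i$ with $\dBM(K_i,H_i)\le1+c(n)\eps$. The second, combined with the good position (on the compact family $B_1^{n-1}\subseteq K_i\subseteq B_\infty^{n-1}$ a near-extremal concave fiber function cannot be simultaneously thin and tall, so $L^1$-closeness self-improves to uniform closeness), forces $K$ to lie within Hausdorff distance $c(n)\eps$ of a body $K'\oplus_1 K''$ or $K'\oplus_\infty K''$ for some partition of the coordinates into two nonempty blocks, with $K',K''$ unconditional of dimension $<n$; since $\PP(K'\oplus_* K'')=\PP(K')\PP(K'')\binom{n}{\dim K'}^{-1}$ for $*\in\{1,\infty\}$ and each factor is at least its own minimum, both $\PP(K')$ and $\PP(K'')$ are within $c(n)\eps$ of the minimum. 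Applying the inductive hypothesis to $K'$ and $K''$, reassembling via the same $\ell_1$- or $\ell_\infty$-sum (which carries Hanner polytopes to Hanner polytopes and respects Banach--Mazur closeness with an $n$-dependent constant), and passing from Hausdorff back to $\dBM$ in the good position, yields a Hanner polytope $H\subseteq\R^n$ with $\dBM(K,H)\le1+c(n)\eps$, after absorbing the finitely many accumulated constants into a new $c(n)$.

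The principal obstacle is the structural stability step: turning ``equality forces a coordinate $\ell_1$- or $\ell_\infty$-split'' into ``small deficit forces closeness to such a split,'' with a linear rate. This has two delicate components. First, one must organize Saint Raymond's argument so that the total deficit $\PP(K)-\PP(B_\infty^n)$ decomposes as a sum of manifestly nonnegative terms admitting a clean equality analysis; this requires selecting the right coordinate and tracking the coupling between the primal side $f_i$ and the polar side $g_i$, which cannot both be cone-like unless the body is already a product. Second, one must upgrade near-equality in these volume and integral inequalities to a genuine Hausdorff approximation of $K$ by a product body — this is precisely where the normalization $B_1^n\subseteq K\subseteq B_\infty^n$ is used, to keep all losses $n$-dependent. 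I note finally that the linear-in-$\eps$ rate (as opposed to $\sqrt\eps$) is the one to expect: the extremizers are polytopes, so the relevant inequalities are conical at the optimum, first-order strict inequalities degrade linearly, and no second-order analysis is needed.
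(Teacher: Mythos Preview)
Your strategy differs substantially from the paper's, and the step you yourself flag as ``the principal obstacle'' is exactly where the argument is incomplete.

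The paper does not attempt to track a deficit $\Delta_i(K)$ in Saint Raymond's chain or to show directly that $K$ is Hausdorff-close to an $\ell_1/\ell_\infty$ split. Instead it uses Meyer's inequality in the form $\PP(K)\ge\frac{4}{n^2}\sum_{j=1}^n\PP(K\cap e_j^\perp)$ to deduce that \emph{every} coordinate section satisfies $\PP(K\cap e_j^\perp)\le(1+n\eps)\PP(B_\infty^{n-1})$, and then applies the inductive hypothesis to all $n$ sections simultaneously, obtaining standard Hanner polytopes $H_1,\ldots,H_n$ with $\dH(K\cap e_j^\perp,H_j)=O(\eps)$. These are glued combinatorially: each $H_j$ is encoded by a graph $G_j$ on $\{1,\ldots,n\}\setminus\{j\}$ with no induced $P_4$; one sets $G=\bigcup_j G_j$, checks that (outside two pathological cases) $G$ itself has no induced $P_4$ and hence corresponds to a standard Hanner polytope $P$, and observes that every vertex of $P$ and of $P^\circ$ lies in some coordinate hyperplane and is therefore $O(\eps)$-close to $\partial K$, giving $\dH(K,P)=O(\eps)$. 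The two exceptional cases --- $G$ complete (all sections near cubes) and $G$ an induced path of length $3$ in $\R^4$ --- are dispatched separately, the first by an ad hoc geometric lemma, the second by a direct volume-product computation showing the associated dual $0$--$1$ polytope has $\PP>\PP(B_\infty^4)$.

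What this buys is that the passage from ``all sections close to Hanner'' to ``$K$ close to Hanner'' becomes a finite combinatorial check on graphs, with no analysis of fiber functions at all. Your route, by contrast, requires you to (i) define $\Delta_i$ so that Saint Raymond's argument genuinely telescopes into nonnegative pieces, (ii) prove that small $\Delta_i$ forces the concave even pair $(f_i,g_i)$ uniformly close to one of the extremal cone/prism configurations with a linear rate, and (iii) show that the resulting near-split in \emph{one} direction determines a coherent block decomposition of $K$. None of these is carried out, and (iii) is genuinely problematic: for the cube every direction looks like a prism and no single coordinate reveals any split, so a proof that commits to ``a well-chosen $i$'' before knowing the global structure cannot succeed without further input. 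The paper's use of all $n$ sections at once, reconciled through the graph language, is precisely the missing idea.
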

We would like to notice that the $\R^2$-case of Theorems \ref{thm:stability_abs_BM}  was proved  as a part of more general stability result on $\R^2$ by  B\"or\"oczky, Makai,  Meyer and  Reisner in \cite{BMMR}, so in this paper we will mainly concentrate on the case $n \ge 3$.

Recently it was proved in \cite{Ki} that a Hanner polytope is a local minimizer of the volume product in the symmetric setting (see the exact statement of the theorem in the beginning of Section 3). In  Section 3 we will use  this fact and Theorem \ref{thm:stability_abs_BM}   to prove that any convex symmetric convex body, which is sufficiently close to an unconditional body, satisfies  (\ref{eq:mah}):
\begin{theorem}\label{th:extension}
Let $K$ be a symmetric convex body in $\R^n$ with 
$$\min\Set{\dBM(K,L):L\subset\R^n \text{ unconditional convex body }} = 1+ \eps,$$ for small $\eps>0$. Then,
\begin{equation*}
\PP(K)\ge(1+c(n)\eps)\cdot\PP(B_\infty^n),
\end{equation*}
where $c(n)>0$ is a constant depending on dimension $n$ only.
\end{theorem}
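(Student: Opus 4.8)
The plan is to split into two regimes according to the size of the volume product of the nearest unconditional body, handling them respectively by the elementary monotonicity of $\PP$ under the Banach--Mazur distance and by the quantitative local minimality of $\PP$ at Hanner polytopes recalled at the beginning of this section. Fix an unconditional convex body $L_0\subset\R^n$ realizing the minimum, so that $\dBM(K,L_0)=1+\eps$, and choose $T\in{\rm GL}(n)$ with $L_0\subset TK\subset(1+\eps)L_0$. Polarity reverses these inclusions, giving $\tfrac{1}{1+\eps}L_0^\circ\subset(TK)^\circ\subset L_0^\circ$, so from $|TK|\ge|L_0|$, $|(TK)^\circ|\ge(1+\eps)^{-n}|L_0^\circ|$ and (\ref{eq:affine_invariance}) we get the basic estimate
$$\PP(K)=\PP(TK)\ \ge\ (1+\eps)^{-n}\,\PP(L_0).$$
Saint Raymond's theorem gives $\PP(L_0)\ge\PP(B_\infty^n)$, and Theorem \ref{thm:stability_abs_BM}, applied to $L_0$ with $\eps$ replaced by $\PP(L_0)-\PP(B_\infty^n)$, says that $L_0$ is close in the Banach--Mazur distance to some Hanner polytope whenever $\PP(L_0)-\PP(B_\infty^n)$ is small.

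Suppose first $\PP(L_0)\ge(1+(n+2)\eps)\PP(B_\infty^n)$. Using the basic estimate and the elementary bound $(1+\eps)^{-n}\ge1-n\eps$ (valid for $\eps<1/n$), the product $\PP(K)$ is at least $(1-n\eps)(1+(n+2)\eps)\PP(B_\infty^n)=(1+2\eps-n(n+2)\eps^2)\PP(B_\infty^n)$, which exceeds $(1+\eps)\PP(B_\infty^n)$ as soon as $\eps\le1/(n(n+2))$. So in this case the asserted bound holds with plenty of room.

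Suppose instead $\PP(L_0)<(1+(n+2)\eps)\PP(B_\infty^n)$, so that $0\le\PP(L_0)-\PP(B_\infty^n)<(n+2)\PP(B_\infty^n)\,\eps$. For $\eps$ small enough that the right-hand side is below the threshold of Theorem \ref{thm:stability_abs_BM}, that theorem supplies a Hanner polytope $H$ with $\dBM(L_0,H)\le1+c_1(n)\eps$, where $c_1(n)$ absorbs the constant of Theorem \ref{thm:stability_abs_BM} and the factor $(n+2)\PP(B_\infty^n)$. The multiplicative triangle inequality $\dBM(K,H)\le\dBM(K,L_0)\,\dBM(L_0,H)$ then gives $\dBM(K,H)\le(1+\eps)(1+c_1(n)\eps)\le1+c_2(n)\eps$ for $\eps\le1$. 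Crucially, every Hanner polytope is unconditional, so $H$ is among the competitors in the minimum defining $\eps$, whence $\dBM(K,H)\ge1+\eps$; altogether $1+\eps\le\dBM(K,H)\le1+c_2(n)\eps$. For $\eps$ below a dimensional threshold, $\dBM(K,H)-1$ lies in the range where the local minimality theorem of \cite{Ki} applies, and it yields $\PP(K)\ge(1+c_3(n)(\dBM(K,H)-1))\PP(H)\ge(1+c_3(n)\eps)\PP(B_\infty^n)$, using $\PP(H)=\PP(B_\infty^n)$. Taking $c(n)=\min\{1,c_3(n)\}$ and $\eps_0(n)$ to be the smallest of the finitely many thresholds used above proves the theorem.

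The only genuinely delicate point I anticipate is this last step: one must be sure that the Hanner polytope $H$ manufactured by Theorem \ref{thm:stability_abs_BM} is not \emph{closer} to $K$ than $1+\eps$ --- otherwise \cite{Ki} would only give a gap proportional to $\dBM(K,H)-1$, which a priori might be much smaller than $\eps$ --- and it is exactly the fact that Hanner polytopes belong to the class of unconditional bodies that forbids this. Everything else (the case split, the relation between $\PP$ and $\dBM$, and reconciling the various smallness conditions into one $\eps_0(n)$) is routine; should the statement of \cite{Ki} be phrased in the Hausdorff metric rather than the Banach--Mazur metric, the same reduction goes through after replacing $\dBM(K,H)-1$ by a comparable quantity.
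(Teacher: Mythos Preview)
Your proof is correct and follows essentially the same two-case strategy as the paper: one regime is handled by the elementary estimate $\PP(K)\ge(1+\eps)^{-n}\PP(L_0)$ together with a lower bound on $\PP(L_0)$, and the other by passing from $K$ to a nearby Hanner polytope and invoking the local minimality result of \cite{Ki}, with the key observation (which you spell out explicitly and the paper uses tacitly) that every Hanner polytope is unconditional, so $\dBM(K,H)\ge 1+\eps$. The only cosmetic difference is the case-split parameter: the paper fixes a dimensional threshold $\gamma_n/3$ and splits on whether $\delta=\dBM(L_0,H_0)-1$ exceeds it, whereas you split on whether $\PP(L_0)$ exceeds $(1+(n+2)\eps)\PP(B_\infty^n)$; via Theorem~\ref{thm:stability_abs_BM} these are equivalent dichotomies.
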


\section{Stability  of the reverse Blaschke-Santal\'o inequality for unconditional convex bodies}

Let  $e_1, \dots e_n$ be the standard orthonormal basis of  $\R^n$. Denote  by $\theta^\perp$ the hyperplane orthogonal to a unit vector $\theta$, and  by $K \cap \theta^\perp$,  $K | \theta^\perp$ the section of $K$ by $\theta^\perp$, the orthogonal projection of $K$ to $\theta^\perp$, respectively.  Let $\R^n_+=\{x \in \R^n: x_i \ge 0, \forall i=1, \dots,n\}$ and $K^+=K\cap \R^n_+$. To prove  Theorem \ref{thm:stability_abs_BM}  we first prove the following  lemma which is based on the inductive argument of
Meyer (see \cite{Me86}  or  \cite{RZ}):
\begin{lemma}\label{lm:stabiter}
Consider $\eps \ge 0$ and unconditional convex body  $K\subset \R^n$, $n \ge 2$ such that
$$\PP(K) \le (1+\eps)\PP(B_\infty^n).$$
Then 
\begin{equation}\label{eq:inductive_step}
\PP(K\cap e_j^\perp)\le(1+n\eps)\PP(B_\infty^{n-1}),\quad j=1,\ldots,n.
\end{equation}
\end{lemma}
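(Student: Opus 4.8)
The plan is to deduce \eqref{eq:inductive_step} from two facts valid for every unconditional body: Saint~Raymond's (exact) reverse Blaschke--Santal\'o inequality in dimension $n-1$, which gives a \emph{lower} bound for the volume product of each section, and Meyer's inductive inequality in dimension $n$,
\begin{equation*}
\PP(K)\ge\frac{4}{n^2}\sum_{j=1}^n\PP(K\cap e_j^\perp),
\end{equation*}
which gives an \emph{upper} bound for the sum of the section volume products. (For $n=2$ every section is a segment and $\PP(K\cap e_j^\perp)=4=\PP(B_\infty^1)$, so the lemma is trivial; assume $n\ge3$.) Feeding the hypothesis into the second inequality and then subtracting the first for the $n-1$ sections other than a fixed one is what yields the factor $n$ in $1+n\eps$.

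Before that I would record the section--projection duality for unconditional bodies. For any symmetric convex body one has $(K\cap e_j^\perp)^\circ=K^\circ\,|\,e_j^\perp$ (polar and projection taken inside $e_j^\perp$); since $K^\circ$ is again unconditional and the projection of an unconditional body onto a coordinate hyperplane coincides with its section by that hyperplane, $K^\circ\,|\,e_j^\perp=K^\circ\cap e_j^\perp$. Hence $K\cap e_j^\perp$ is an unconditional convex body in $\R^{n-1}$ with polar $K^\circ\cap e_j^\perp$, so $\PP(K\cap e_j^\perp)=\abs{K\cap e_j^\perp}\,\abs{K^\circ\cap e_j^\perp}$, and Saint~Raymond's theorem applied in $\R^{n-1}$ gives $\PP(K\cap e_j^\perp)\ge\PP(B_\infty^{n-1})$ for all $j$.

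For Meyer's inequality I would pass to the positive orthant, where $\PP(K)=4^n\abs{K^+}\,\abs{(K^\circ)^+}$ and $\PP(K\cap e_j^\perp)=4^{n-1}\abs{K^+\cap e_j^\perp}\,\abs{(K^\circ)^+\cap e_j^\perp}$. With $h_K(\theta)=\max_{x\in K}\brA{x,\theta}$, for unconditional $K$ the point $h_K(e_j)e_j$ lies in $K^+$, so convexity of $K^+$ gives $K^+\cap\{x_j=t\}\supseteq(1-t/h_K(e_j))(K^+\cap e_j^\perp)$ for $0\le t\le h_K(e_j)$; integrating in $t$ produces the bipyramid bound $\abs{K^+}\ge\frac{h_K(e_j)}{n}\abs{K^+\cap e_j^\perp}$, and the same applied to $K^\circ$, together with $h_{K^\circ}(e_j)=h_K(e_j)^{-1}$, gives $\abs{(K^\circ)^+}\ge\frac{1}{n\,h_K(e_j)}\abs{(K^\circ)^+\cap e_j^\perp}$. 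Multiplying these two, for each single $j$ one gets $\abs{K^+\cap e_j^\perp}\,\abs{(K^\circ)^+\cap e_j^\perp}\le n^2\abs{K^+}\,\abs{(K^\circ)^+}$. The delicate point — and what I expect to be the main obstacle — is that summing the $n$ of these naively loses a factor $n$ (giving $n^3$ instead of $n^2$); to keep the constant $4/n^2$ one must run Meyer's inductive argument, exploiting that the $n$ near-equality cases (each forcing $K^+$, respectively $(K^\circ)^+$, to be a cone over a coordinate section) cannot occur simultaneously.

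To finish: from the hypothesis and Meyer's inequality, $\sum_{i=1}^n\PP(K\cap e_i^\perp)\le\frac{n^2}{4}(1+\eps)\PP(B_\infty^n)$. A direct computation gives $\PP(B_\infty^n)=\abs{B_\infty^n}\,\abs{B_1^n}=4^n/n!$, so $\PP(B_\infty^n)=\frac{4}{n}\PP(B_\infty^{n-1})$ and the right-hand side equals $n(1+\eps)\PP(B_\infty^{n-1})$. Fixing $j$ and using $\PP(K\cap e_i^\perp)\ge\PP(B_\infty^{n-1})$ for the remaining $i\ne j$,
\begin{equation*}
\PP(K\cap e_j^\perp)\le n(1+\eps)\PP(B_\infty^{n-1})-(n-1)\PP(B_\infty^{n-1})=(1+n\eps)\PP(B_\infty^{n-1}),
\end{equation*}
which is \eqref{eq:inductive_step}.
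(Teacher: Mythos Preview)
Your overall strategy and the closing deduction are exactly the paper's: combine Meyer's inequality $\PP(K)\ge\frac{4}{n^2}\sum_{j}\PP(K\cap e_j^\perp)$ with the reverse Blaschke--Santal\'o inequality for unconditional bodies in dimension $n-1$, and subtract (the paper phrases the last step as a contradiction, but it is the same computation). The gap you flag in your third paragraph is real, however, and the repair you suggest is not how Meyer's argument runs. Your per-coordinate cone bounds $\abs{K^+}\ge\frac{h_K(e_j)}{n}\abs{K^+\cap e_j^\perp}$ come from choosing a \emph{different} apex $h_K(e_j)e_j$ for each $j$; that is precisely why summing them loses a factor $n$, and no ``incompatibility of near-equality cases'' will recover it.

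The correct device (given in full in the paper, following Meyer) is to fix a \emph{single} apex $x\in K^+$ and form all $n$ cones $K_i^+=\conv\{x,\,K^+\cap e_i^\perp\}$ at once. These have pairwise disjoint interiors and all sit inside $K^+$, so $\abs{K^+}\ge\sum_i\frac{x_i}{n}\abs{K^+\cap e_i^\perp}$, which rewrites as the linear inequality $\brA{x,\sum_i\frac{2\abs{K\cap e_i^\perp}}{n\abs{K}}e_i}\le1$ valid for every $x\in K^+$, hence (by unconditionality) for every $x\in K$. Thus the vector $\sum_i\frac{2\abs{K\cap e_i^\perp}}{n\abs{K}}e_i$ lies in $K^\circ$; running the same argument for $K^\circ$ produces a vector in $K$, and pairing the two via $\brA{\cdot,\cdot}\le1$ gives directly $\sum_i\frac{4\abs{K\cap e_i^\perp}\,\abs{K^\circ\cap e_i^\perp}}{n^2\abs{K}\,\abs{K^\circ}}\le1$. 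With your section--projection duality $(K\cap e_i^\perp)^\circ=K^\circ\cap e_i^\perp$ this is exactly Meyer's inequality, after which your last paragraph finishes the lemma.
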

\begin{proof}  
Consider $x \in K^{+}$ and $n$ pyramids created by taking the convex hull  of $x$ and the intersection of $K^+$ with each coordinate hyperplane. More precisely, let $ K^{+}_i=\conv\{x, K^{+}\cap e_i^\perp\}$ for $i=1,\ldots,n$.
  Then, using symmetry  of $K$ with respect to coordinate hyperplanes,
$$
|K|=2^n|K^+| \ge 2^n| \bigcup\limits_{i=1}^n K_i^+| =2^n \sum\limits_{i=1}^n \frac{1}{n} \brA{x,e_i}\frac{|K\cap e_i^\perp|}{2^{n-1}}= \sum\limits_{i=1}^n  \brA{x,e_i}\cdot\frac{2|K\cap e_i^\perp|}{n},$$
or equivalently,
$$
\brA{x,\sum_{i=1}^n  \frac{2| K\cap e_i^\perp|}{n |K|}\,e_i} \le 1, \mbox{ for all $x \in K^+$.}
$$
Note that, by the unconditionality of $K$, the above inequality holds for all $x\in K$, so
\begin{equation*}
\sum_{i=1}^n  \frac{2| K\cap e_i^\perp|}{n |K|}\,e_i \in K^\circ.
\end{equation*}
Applying the same argument to $K^\circ$ we get
\begin{equation*}
\sum_{i=1}^n  \frac{2| K^\circ\cap e_i^\perp|}{n |K^\circ|}\,e_i \in K.
\end{equation*}
Thus, using the  definition of polarity  we get:
\begin{equation}\label{eq:tr}
\sum\limits_{i=1}^n\frac{2| K\cap e_i^\perp|}{n |K|} \times \frac{2 |K^\circ\cap e_i^\perp|}{n |K^\circ|} \le 1.
\end{equation}
Next  notice that 
\begin{equation}\label{eq:sym}
K^\circ\cap e_i^\perp = (K | e_i^\perp)^\circ =(K \cap e_i^\perp)^\circ,
\end{equation}
where the last equality follows from the unconditionality of $K$. Finally from (\ref{eq:tr}), (\ref{eq:sym})  we get
\begin{equation}\label{eq:meyer}
\PP(K)\ge\frac{4}{n^2}\sum\limits_{i=1}^n | K\cap e_i^\perp|\times |(K \cap e_i^\perp)^\circ|=\frac4{n^2}\sum_{j=1}^n\PP(K\cap e_j^\perp).
\end{equation}
Now, we will use (\ref{eq:meyer}) to prove our lemma. Since $\PP(K)\le(1+\eps)\PP(B_\infty^n)$ and $\PP(B_\infty^n)=\frac4{n^2}\sum_{j=1}^n\PP(B_\infty^{n-1})$, we get
\begin{equation*}
\frac4{n^2}\sum_{j=1}^n(1+\eps)\PP(B_\infty^{n-1}) = (1+\eps)\PP(B_\infty^n) 
\ge \PP(K)\ge\frac4{n^2}\sum_{j=1}^n\PP(K\cap e_j^\perp).
\end{equation*}
It  implies that 
$$
\PP(K\cap e_j^\perp)\le(1+n\eps)\PP(B_\infty^{n-1}),\quad j=1,\ldots,n.
$$
Indeed, if $\PP(K\cap e_1^\perp)>(1+n\eps)\PP(B_\infty^{n-1})$, then 
\begin{equation}\label{eq:last}
\sum_{j=1}^n\PP(K\cap e_j^\perp)>(1+n\eps)\PP(B_\infty^{n-1})+(n-1)\PP(B_\infty^{n-1})= \frac{n^2}4(1+\eps)\PP(B_\infty^n),
\end{equation}
where in the first inequality we used our assumption for the $e_1^\perp$-section and the reverse Blaschke-Santal\'o inequality (\ref{eq:mah}) for unconditional convex bodies for the sections by $e_i^\perp$, $i=2, \dots, n$. Finally note that (\ref{eq:last}) together with (\ref{eq:meyer}) gives $\PP(K)>(1+\eps)\PP(B_\infty^n)$; contradiction!
\end{proof}
Next lemma will help us to treat the case of a convex body $K \subset \R^n$ whose sections by coordinate hyperplanes are close to the ($n-1$)-dimensional cube.

\begin{lemma}\label{lem:almost_cube_case}
Let $K\subset B_\infty^n$ be a convex body in $\R^n$, for $n\ge3$, satisfying 
\begin{equation}\label{eq:cond}
K\cap e_j^\perp=B_\infty^n\cap e_j^\perp,\quad\forall\, j=1,\ldots,n.
\end{equation}
Let $p=(t,\ldots,t)\in\partial K \cap \R^n_+$. Then $$|K||K^\circ|\ge \brR{1+c(1-t)}|B_1^n||B_\infty^n|,$$
where $c=c(n)>0$ is a constant depending on $n$ only.
\end{lemma}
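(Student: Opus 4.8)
Write $\eps:=1-t$. The section hypothesis (\ref{eq:cond}) gives $K\supset L:=\conv\bigl(\bigcup_{i=1}^{n}(B_\infty^n\cap e_i^\perp)\bigr)$, and one checks that $L=B_\infty^n\cap(n-1)B_1^n$, whose boundary crosses the ray $\R_{\ge0}(1,\dots,1)$ at $\tfrac{n-1}{n}(1,\dots,1)$; hence $p=(t,\dots,t)\in\partial K$ forces $t\ge\tfrac{n-1}{n}$, i.e. $0\le\eps\le\tfrac1n$. The plan is to prove the two one--sided estimates
\[
|K|\ \ge\ 2^n\Bigl(1-\frac{\eps}{(n-1)!}\Bigr)\qquad\text{and}\qquad |K^\circ|\ \ge\ \frac{|B_1^n|}{t},
\]
and then simply multiply. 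I will use that $K$ is unconditional (as it is in the application of the lemma): this is what makes the identity $|K|=2^n\,|K\cap\R^n_+|$ and the duality $K^\circ\cap e_i^\perp=(K\cap e_i^\perp)^\circ$ of (\ref{eq:sym}) available.

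\emph{Lower bound for $|K|$.} Since $p\in K$ and $K$ is unconditional, all $2^n$ sign changes $(\pm t,\dots,\pm t)$ of $p$ lie in $K$, so $tB_\infty^n\subset K$, and together with $L\subset K$ this gives $K\supset\conv(tB_\infty^n\cup L)$. The polytope $\conv(tB_\infty^n\cup L)$ differs from $B_\infty^n$ only inside $2^n$ pairwise disjoint corner regions, one for each vertex $\si\in\{-1,1\}^n$: the region near $\si$ is $\{x\in B_\infty^n:\brA{\si,x}>n-1\}$, which automatically lies in the orthant of $\si$ and has volume $1/n!$, and inside it the cube is replaced by the cone with apex $t\si$ over the facet $\{\brA{\si,x}=n-1\}$ of $L$, whose volume is $(1-n\eps)/n!$. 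Thus the volume lost near each vertex is $\frac1{n!}-\frac{1-n\eps}{n!}=\frac{n\eps}{n!}$, and $|K|\ge|\conv(tB_\infty^n\cup L)|=2^n-2^n\cdot\frac{n\eps}{n!}=2^n\bigl(1-\frac{\eps}{(n-1)!}\bigr)$.

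\emph{Lower bound for $|K^\circ|$.} Run Meyer's argument, exactly as in the proof of Lemma~\ref{lm:stabiter}, on the unconditional body $K^\circ$: the point $m':=\sum_{i=1}^{n}\frac{2\,|K^\circ\cap e_i^\perp|}{n\,|K^\circ|}\,e_i$ lies in $(K^\circ)^\circ=K$. By (\ref{eq:sym}) and (\ref{eq:cond}), $K^\circ\cap e_i^\perp=(B_\infty^{n-1})^\circ=B_1^{n-1}$, of $(n-1)$--volume $2^{n-1}/(n-1)!$, so $m'=\frac{2^n}{n!\,|K^\circ|}\,(1,\dots,1)$. As $m'$ is a nonnegative multiple of $(1,\dots,1)$ lying in $K$ and $p=(t,\dots,t)\in\partial K$ forces $K\cap\R_{\ge0}(1,\dots,1)=\{s(1,\dots,1):0\le s\le t\}$, we get $\frac{2^n}{n!\,|K^\circ|}\le t$, that is $|K^\circ|\ge\frac{2^n}{n!\,t}=\frac{|B_1^n|}{t}$. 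Multiplying the two estimates and invoking the elementary inequality $\frac{1-a\eps}{1-\eps}=1+\frac{(1-a)\eps}{1-\eps}\ge 1+(1-a)\eps$ (for $a\in[0,1)$, $\eps\in[0,1)$) with $a=\frac1{(n-1)!}$ gives
\[
|K|\,|K^\circ|\ \ge\ |B_1^n|\,|B_\infty^n|\cdot\frac{1-\eps/(n-1)!}{1-\eps}\ \ge\ \bigl(1+c(n)(1-t)\bigr)\,|B_1^n|\,|B_\infty^n|,\qquad c(n):=1-\frac1{(n-1)!},
\]
which is positive precisely because $n\ge3$.

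The step I expect to be the real work is the volume bookkeeping in the bound for $|K|$: verifying that $\conv(\bigcup_i(B_\infty^n\cap e_i^\perp))$ is exactly $B_\infty^n\cap(n-1)B_1^n$, that adjoining $tB_\infty^n$ refills each corner of the cube by precisely the stated cone and nothing more, and that the $2^n$ leftover corner pieces are disjoint. The estimate for $|K^\circ|$, by contrast, is the short conceptual heart of the matter: the section hypothesis pins the Meyer point $m'$ on the diagonal at a position that must lie inside $K$, and since $K$ reaches only distance $t\sqrt n$ along the diagonal, $|K^\circ|$ is pushed up by the factor $1/t$ --- a gain linear in $1-t$ with coefficient close to $1$ --- whereas shortening the diagonal of $K$ costs it only the much smaller linear factor $\frac{1-t}{(n-1)!}$; hence the product of the two volumes strictly exceeds $|B_1^n|\,|B_\infty^n|$.
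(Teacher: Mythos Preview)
Your argument is correct under the extra hypothesis that $K$ is unconditional, which you state explicitly and which is indeed all that is used in the proof of Theorem~\ref{thm:stability_abs_BM}. However, the lemma as stated does \emph{not} assume unconditionality (the paper even remarks on this just after the statement), and your proof does not extend: both the inclusion $tB_\infty^n\subset K$ and the Meyer-point identity $K^\circ\cap e_i^\perp=(K\cap e_i^\perp)^\circ$ genuinely require it.

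The paper's route to the general case is different in exactly this respect. It works orthant by orthant: in $\R^n_+$ it builds the same $P=\conv\bigl(\{p\}\cup\{x\in B_\infty^n\cap\R^n_+:\sum x_i\le n-1\}\bigr)$ that you do, but for $K^\circ$ it does not use Meyer's diagonal point. Instead it picks a supporting functional $q\in\partial K^\circ\cap\R^n_+$ with $\langle p,q\rangle=1$, sets $Q=\conv(\{q\}\cup(B_1^n\cap\R^n_+))$, and notes that $q$ lies on the hyperplane $\sum x_i=1/t$, so $|Q|=\tfrac{1}{n!\,t}$. This gives $|P|\,|Q|\ge 4^{-n}|B_\infty^n||B_1^n|(1+c_n(1-t))$ with $c_n=1-\tfrac{1}{(n-1)!}$, identical to your product in the positive orthant. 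For the remaining $2^n-1$ orthants one only has the trivial bound $|P_\delta||Q_\delta|\ge 4^{-n}|B_\infty^n||B_1^n|$, and the orthant contributions are glued together via $\bigl(\sum|P_\delta|\bigr)\bigl(\sum|Q_\delta|\bigr)\ge\bigl(\sum\sqrt{|P_\delta||Q_\delta|}\bigr)^2$, at the price of degrading the constant to $2^{-n-1}c_n$.

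Your Meyer-point bound $|K^\circ|\ge|B_1^n|/t$ is a genuinely different and cleaner way to control $|K^\circ|$ than the paper's explicit dual point $q$, and it recovers exactly the paper's ``unconditional'' constant $c_n=1-\tfrac{1}{(n-1)!}$ with no loss. What it buys is brevity; what the paper's construction buys is the general (not necessarily unconditional) statement. If you want to match the lemma as stated, you would need to replace both the $tB_\infty^n\subset K$ step and the Meyer argument by their one-orthant analogues and then combine as the paper does.
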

\noindent{Remark:} The proof of Lemma \ref{lem:almost_cube_case} does not require the assumption of unconditionality of $K$. Such assumption would make the proof a bit shorter and would improve constant $c(n)$.

\begin{proof} 
Using   (\ref{eq:cond})  we claim  that $K$ contains $n$ vectors  of the form $(1,\dots,1, 0, 1,\dots, 1)$ (i.e. $n-1$ coordinates are equal to $1$ and one coordinate is  equal to $0$). Moreover, $K$ must contain the convex hull of those points. Thus $\{x: \brA{x,e_1+\cdots+e_n}=n-1\} \cap B_\infty^n \subset K$, which gives
$t \ge (n-1)/n$.  Next we choose  $q\in\partial K^\circ$  such that $\brA{p,q}=1$. Here, (\ref{eq:cond}) guarantees that the normal vector to $\partial K$ at $p$ belongs to $\R^n_+$, and thus we may assume $q\in \R^n_+$.  Let 
\begin{align*}
P&=\conv\brR{\set{p}\cup\set{x\in B_\infty^n\cap\R_+^n:\brA{x,e_1+\cdots+e_n}\le n-1}},\\
Q&=\conv\brR{\set{q}\cup B_1^n\cap\R_+^n}.
\end{align*}
Then $K\cap\R_+^n\supset P$.  Also from $K\subset B_\infty^n$ we get $K^\circ \supset B_1^n$   and thus $K^\circ\cap\R_+^n\supset Q$. Next we notice that $q$ belongs to the hyperplane with normal vector $(1/\sqrt{n},\dots, 1/\sqrt{n})$ whose distance from the origin is $1/(t\sqrt{n})$. Thus
\begin{align*}
|P| &=1-\frac1{n!}+\frac1{n!}\cdot\frac{t\sqrt{n}-\frac{n-1}{n}\sqrt{n}}{1/\sqrt{n}}= 1-\frac{1-t}{(n-1)!}, \\
|Q| &=\frac1{n!}\cdot\frac{1/(\sqrt{n}t)}{1/\sqrt{n}}=\frac1{n!t}.
\end{align*}
It gives
\begin{equation}\label{eq:symt}
|P||Q|=\frac1{n!}\brR{1+\brS{1-\frac1{(n-1)!}}\frac{1-t}{t}}\ge 4^{-n}|B_\infty^n||B_1^n|(1+c_n(1-t)),
\end{equation}
where $c_n=1-\frac1{(n-1)!}$ is a positive constant in case of $n\ge3$. 
Note also that we have  $|P||Q|\ge 4^{-n}|B_\infty^n||B_1^n|$, independently of the position of $p$ (i.e. independently of the lower/upper bounds on $t$).

If we would assume that $K$ is unconditional then we would be able to  finish the proof by simply multiplying (\ref{eq:symt}) by $4^n$. In all other cases we must split $K$ and $K^\circ$ into $4^n$ parts each depending on the choice of signs of coordinates. Construct $P$ and $Q$ corresponding to each part, compute the corresponding volumes, and take the sum.  We may apply (\ref{eq:symt}) to the part corresponding to $\R^n_+$. In all other parts we are not guaranteed the position of the point on the boundary, so we can estimate the volume using the remark after (\ref{eq:symt}). More precisely, 
$$
|K||K^\circ| =\left( \sum\limits_\delta |K_\delta|\right)\left( \sum\limits_\delta |K^\circ_\delta|\right) \ge \left( \sum\limits_\delta |P_\delta|\right)\left( \sum\limits_\delta |Q_\delta|\right) \ge \left( \sum\limits_\delta \sqrt{|P_\delta| |Q_\delta|}\right)^2,
$$
where the sum is taken over all possible choices of $n$ signs  $\delta$ and $K_\delta$ is a subset of $K$ corresponding to $\delta$. Next
\begin{align*}
|K||K^\circ| &\ge \brR{\sqrt{4^{-n}|B_\infty^n||B_1^n|(1+c_n(1-t))}+(2^n-1)\sqrt{4^{-n}|B_\infty^n||B_1^n|}}^2\\
&= |B_\infty^n||B_1^n|\brS{1+2^{-n}\brR{\sqrt{1+c_n(1-t)}-1}}^2 \\
&\ge |B_\infty^n||B_1^n|\,\brR{1+2^{-n-1}c_n(1-t)}.
\end{align*}
\end{proof}

Next we would like to review some main properties and definitions about $\ell_1$ and $\ell_\infty$ sums, Hanner polytopes and their connections to graphs. We refer to \cite{R91, Ki} for more details. 

Let $A$, $B$ be convex subsets of $\R^n$. The $\ell_1$-sum of $A$ and $B$ is defined by  $\conv(A\cup B)$, the convex hull of a set $A \cup B$,  and the $\ell_\infty$-sum is defined by  $A+B$, the Minkowski sum of $A$ and $B$.

We recall that every Hanner polytope in $\R^n$ can be obtained from $n$ symmetric intervals in $\R^n$ by taking the $\ell_1$ or $\ell_\infty$ sums. In particular, a Hanner polytope in $\R^n$ is called \emph{standard} if it is obtained from the intervals $[-e_1,e_1],\ldots,[-e_n,e_n]$ by taking the $\ell_1$ or $\ell_\infty$ sums. It is easy to see that every Hanner polytope is a linear image of a standard Hanner polytope.  Moreover, each coordinate of any vertex of a standard Hanner polytope is $0$ or $\pm1$.

The definition of a dual 0-1 polytope was given in \cite{R91} (the term `a dual 0-1 space' is used there, as a normed space with a dual 0-1 polytope): an unconditional polytope $P$ in $\R^n$ is called a \emph{dual 0-1 polytope} if each coordinate of any vertex of $P$ and $P^\circ$ is $0$ or $\pm1$. Every dual 0-1 polytope $P$ in $\R^n$ can be associated with the graph $G=G(P)$ with the vertex set $\set{1,\cdots,n}$ and the edge set defined as follows: 
\begin{center} 
$i,j\in\set{1,\ldots,n}$ are connected by an edge of $G$ if $e_i+e_j\not\in P$.
\end{center}
We note that for each $i$, $j\in\set{1,\ldots,n}$ both the section and the projection of a dual $0-1$-polytope $P$ by $\spn\set{e_i,e_j}$ are dual 0-1 polytopes, that is, $2$-dimensional $\ell_1$- or $\ell_\infty$-balls. It gives that $e_i+e_j\not\in H$ if and only if $e_i+e_j\in H^\circ$, and thus $G(P^\circ)=\overline{G(P)}$ where $\overline{G}$ denotes the compliment of $G$. 

In addition, it turned out (see Theorem 2.5, \cite{R91}) that dual 0-1 polytopes in $\R^n$ are in one-to-one correspondence with perfect graphs on $\set{1,\ldots,n}$. In particular, as a special family of dual 0-1 polytopes, the standard Hanner polytopes in $\R^n$ are in one-to-one correspondence with the graphs  on $\{1, \dots, n\}$ which do not contain any induced path of edge length $3$ (see \cite{Se} or Lemma 3.5 in \cite{R91}). Here, an induced path of a graph $G$ means a sequence of different vertices of $G$ such that each two adjacent vertices in the sequence are connected by an edge of $G$, and each two nonadjacent vertices in the sequence are not connected.

Let $G$ be the graph associated with a dual 0-1 polytope $P$ as above. It turns out (Lemma 2.5, \cite{R91}) that a point $v\in\R^n$ is a vertex of $P$ if and only if each coordinate of $v$ is $-1$, $0$, or $1$, and the support  $$\supp(v)=\set{j:\brA{v,e_j}\neq 0}$$ of $v$ is a maximal {\it independent set} of $G$, i.e., a maximal set with respect to the set-inclusion such that no pair of elements is connected by an edge.

Note that if $K$ is unconditional then we may apply a diagonal transformation to $K$ and assume that $B_1^n \subset K \subset B_\infty^n$. Indeed, this follows immediately
from the fact that a tangent hyperplane with a normal vector $e_i$  touches  $K$ at point $y_i$ which is  a  dilate of $e_i$. Thus to prove   Theorem \ref{thm:stability_abs_BM} it is enough to prove the following statement:

\medskip
\noindent{\bf Equivalent form of Theorem \ref{thm:stability_abs_BM}: }  {\it 
Let $K$ be an unconditional convex body in $\R^n$ satisfying $B_1^n\subset K\subset B_\infty^n$. If $\PP(K)\le(1+\eps)\PP(B_\infty^n)$, then there exists a standard Hanner polytope $H$ in $\R^n$ such that $\dH(K,H)=O(\eps)$.}
\medskip

\noindent Here $\dH$ is  the {\it Haussdorff distance} $\dH$ of two sets $K, L \subset \R^n$ is defined by $$\dH(K,L) = \max\brR{\max_{x\in K}\min_{y\in L}|x-y|,\,\,\max_{y\in L}\min_{x\in K}|x-y|}.$$

\begin{proof}[\bf Proof  of Theorem \ref{thm:stability_abs_BM}]  
We use induction on the dimension $n$ to prove the above statement. It is trivial if $n=1$, and the case  $n=2$ was proved in \cite{BMMR}.  Assume that the statement is true for all unconditional convex bodies of dimension $(n-1)$, $n\ge 3$.  
For the inductive step, consider an unconditional  convex body $K \subset \R^n$ satisfying
\begin{equation*}
B_1^n\subset K\subset B_\infty^n\quad\text{and}\quad\PP(K)\le(1+\eps)\PP(B_\infty^n).
\end{equation*}
We apply  Lemma  \ref{lm:stabiter} and the inductive hypothesis  to get standard Hanner polytopes $H_1\subset\R^n\cap e_1^\perp$, $\ldots$ , $H_n\subset\R^n\cap e_n^\perp$ such that 
\begin{equation}\label{eq:assumption_on_hyperdimension}
\dH(H_j, K\cap e_j^\perp)=O(\eps),\quad\text{for }j=1,\ldots,n.
\end{equation}
Now we would like to show that we can `glue' these $(n-1)$-dimensional Hanner polytopes  to create an $n$-dimensional Hanner polytope which is $O(\eps)$-close to $K$. 
For each $j$, let $G_j=G(H_j)$ be the graph associated with  $H_j$. Note that  $\set{1,\ldots,n}\setminus\set{j}$ is the vertex set of $G_j$ and that  $G_j$  does not contain an induced path of edge-length $3$. 

Consider the graph $G$ with vertex set $\set{1,\ldots,n}$ and containing all edges of $G_1,\ldots,G_n$. Our goal is to show that the polytope $P$ corresponding to $G$ is a Hanner polytope which  is $O(\eps)$- close to $K$.

We claim that the graph $G$ satisfies the following three properties.
\begin{enumerate}
\item[1.] Each $G_j$ is an induced subgraph of $G$ (i.e. for any $k$, $l\in\set{1,\ldots,n}$,  $k$ and $l$ are connected by an edge of $G_j$  if and only if they are connected by an edge of $G$).
\item[2.] If $G$ is not the path of edge-length 3, then $G$ does not contain any path of edge-length 3 as an induced subgraph.
\end{enumerate}
If $G$ is the path of edge-length 3, then it is a perfect graph. Otherwise, the second property implies that $G$ does not contain any path of edge-length 3, that is, $G$ is the graph associated with a Hanner polytope. Thus, in any cases $G$ must be a perfect graph. 

Let $P$ be the 0-1 polytope associated with $G$, i.e., $G=G(P)$. 
\begin{enumerate}
\item[3.] If $G$ is neither the complete graph nor its complement, then $\dH(K,P)=O(\eps)$. 
\end{enumerate}

Indeed, to show the first property consider  different $i,j,k\in\set{1,\ldots,n}$. Note that the existence of  $ij$ edge depends only on the properties of $K\cap\spn\set{e_i,e_j}$, so $i$, $j$ are connected by an edge of $G_k$ if and only if they are connected by an edge of $G$. It implies that each $G_k$ is an induced subgraph of $G$. 

To prove  the second property, assume that $G$ is not the path of edge-length $3$. If $G$ contains a path of edge-length $3$, then there is $j\in\set{1,\ldots,n}$ which does not belong to this path. Thus, $G_j$, as an induced subgraph of $G$, must also contain the path of edge-length $3$, which contradict with the definition of $G_j$.  

To show  the third property, we first note that if $G$ is not the complete graph then every maximal independent set of $G$ is a proper subset of $\{1,\dots, n\}$. The vertices of  $P$  have coordinates $0$, $1$ or $-1$ and the  support of a vertex  of $P$ is a maximal independent set of $G$. This implies that every vertex of $P$ has at least one zero coordinate, i.e. every vertex of $P$ is contained in one of coordinate hyperplanes. This, together with the inductive hypothesis, gives that each vertex of $P$ is $O(\eps)$-close to the boundary of $K$, which means that $P$ is contained in $O(\eps)$-neighborhood of $K$. It gives $K\supset (1+O(\eps))P$. Repeating the same argument for $\overline{G}$ we get   $K^\circ\supset (1+O(\eps))P^\circ$ and thus $\dH(K,P)=O(\eps)$.

From the above three properties, we finish the proof of  Theorem \ref{thm:stability_abs_BM}, modulo two pathological cases:
\begin{enumerate}
\item[I.] $G$ or $\overline{G}$  is the complete graph.
\item[II.] $G$ is the path of edge-length 3.
\end{enumerate}
 Therefore, it remains to consider those  cases: \\

\noindent{\bf CASE I: }{\it $G$ (respectively $\overline{G}$) is the complete graph, so all $H_1,\ldots,H_n$ are the $(n-1)$-dimensional cubes  (respectively  $(n-1)$-dimensional cross-polytopes).}   Taking the polar if necessary,  we may  assume, without loss of generality,  that all $H_1,\ldots,H_n$ are the $(n-1)$-dimensional cubes. Let $\tilde{K}=\brR{\frac1{1-\de}K}\cap B_\infty^n$, where 
$$\de=\min\set{d:(1-d)H_j\subset K\cap e_j^\perp\subset H_j,\,\,\forall j=1,\ldots,n}.
$$
Notice that $\de=O(\eps)$ and hence $\dH(K,\tilde{K})=O(\eps)$. By (linear order) continuity of the volume product in the Hausdorff metric, we get $\PP(\tilde{K})\le(1+O(\eps))\PP(K)$. Thus, the assumption $\PP(K)\le(1+\eps)\PP(B_\infty^n)$ gives
\begin{equation}\label{eq:vol_product_K_tilde}
\PP(\tilde{K})\le(1+O(\eps))\PP(B_\infty^n).
\end{equation}
Also, using $H_j=B_\infty^n \cap e_j^\perp$ and the definition of $\de$ we get
\begin{equation}\label{eq:almost_cube_in_hyperdimension}
\tilde{K}\cap e_j^\perp=B_\infty^n\cap e_j^\perp,\quad\forall\, j=1,\ldots,n.
\end{equation}
Consider  $p=(t,\ldots,t)\in\partial\tilde{K}$. Then Lemma \ref{lem:almost_cube_case} gives
$$
\PP(\tilde{K})\ge(1+c(1-t))\PP(B_\infty^n).
$$
Together with  \eqref{eq:vol_product_K_tilde} the above implies that $1-t=O(\eps)$. Thus $p$ is in $O(\eps)$-neighborhood of the vertex of $B_\infty^n$. the unconditionality of  $\tilde{K}$ and \eqref{eq:almost_cube_in_hyperdimension} give  that $\dH(\tilde{K},B_\infty^n)=O(\eps)$ and hence $\dH(K,B_\infty^n)=O(\eps)$.\\

\noindent{\bf CASE II}: {\it $G$ is the path of edge-length 3.} The direct computation below will  show that this case contradicts with assumption on the volume product of $K$. Indeed in this case  $K\subset\R^4$ is $O(\eps)$-close to the polytope $P$ which is associated with the path of edge length $3$. The volume product of $P$ is strictly greater than that of the cube $B_\infty^4$. Thus selecting $\eps_4$ small enough we will be able to contradict the assumption of the Theorem \ref{thm:stability_abs_BM}. More precisely, if $G$ is represented by the vertex set $\set{1,2,3,4}$
and the edge set $\set{12,23,34}$, then $\overline{G}$ has the same vertex set and the edge set $\set{24,41,13}$. Applying the characterization of vertices of a polytope by maximal independent sets, we can get
$$
P=\conv\Set{(\pm1,0,\pm1,0),(0,\pm1,0,\pm1)} \mbox{  and   }   P^\circ=\conv\Set{( \pm1,\pm1, 0, 0),(0,0, \pm1, \pm1)}.
$$
Then we have $|P|=|P^\circ|$ and 
\begin{align*}
|P|&=\abs{\conv\Set{(\pm1,0,\pm1,0),(0,\pm1,0,\pm1)}}\\
&=\abs{\Set{x \in \R^4: |x_1|+|x_2|\le1, |x_2|+|x_3|\le1, |x_3|+|x_4|\le1}}\\
&=2\int_{0}^1 \abs{\Set{ x \in \R^3:|x_1|+|x_2|\le1, |x_2|+|x_3|\le1, |x_3|\le1-s}} ds\\
&=4\int_{0}^1 \int_{0}^{1-s}\abs{\Set{ x \in \R^3:|x_1|+|x_2|\le1, |x_2|\le1-t}} dt ds\\
&=8\int_{0}^1 \int_{0}^{1-s} (1-t^2) dt ds=\frac{10}{3}.\\
\end{align*}
Thus $\PP(P)=\brR{\frac{10}{3}}^2 > \frac{4^4}{4!}=\PP(B_\infty^4)$. On the other hand, note that in this case $G$ and  $\overline{G}$ are not complete graphs, so we may apply the Property (3) from above to claim that  $K$ is $O(\eps)$-close to $P$. Therefore, $\PP(K)>\PP(B_\infty^4)$, which contradicts the assumption $\PP(K)\le(1+\eps)\PP(B_\infty^4)$ for $\eps>0$ small enough.\end{proof}

\section{Minimality near unconditional convex bodies}
The following local minimality result, proved in  \cite{Ki},  is the main tool of this section 
\begin{theorem}\label{th:kim}
Let $K$ be a symmetric convex body in $\R^n$ close to one of Hanner polytopes in $\R^n$ in the sense that $$\min\Set{\dBM(K,H): \text{$H$ is a Hanner polytope in $\R^n$}}=1+\eps$$ for small $\eps>0$. Then $$\PP(K)\ge\PP(B_\infty^n)+c(n)\eps$$ where $c(n)>0$ is a constant depending on the dimension $n$ only.
\end{theorem}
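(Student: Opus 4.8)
The plan is to establish this local minimality by induction on the dimension $n$, combining the affine invariance (\ref{eq:affine_invariance}) with the recursive $\ell_1/\ell_\infty$ structure of Hanner polytopes and the combinatorial description of standard Hanner polytopes as the dual 0-1 polytopes whose associated graph contains no induced path of edge-length~$3$, as recalled in Section~2. Both $\PP$ and the Banach-Mazur distance to the finitely many ${\rm GL}(n)$-orbits of Hanner polytopes are invariant under ${\rm GL}(n)$ and under polarity (the polar of a Hanner polytope is again a Hanner polytope), so I may assume that the nearest Hanner polytope $H$ to $K$ is a standard one and is positioned so that $H\subseteq K\subseteq(1+\eps)H$; then $\dH(K,H)=O(\eps)$ and, by polarity, $\dH(K^\circ,H^\circ)=O(\eps)$. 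The cases $n=1$ (where $\eps=0$) and $n=2$ (only the square occurs, and this is contained in \cite{NPRZ} and \cite{BMMR}) start the induction, and the cases $H=B_\infty^n$ or $H=B_1^n$ in every dimension are exactly the theorem of Nazarov, Petrov, Ryabogin and Zvavitch \cite{NPRZ}, which I would take as a further input.

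For $n\ge3$ the nearest standard Hanner polytope is not an interval, so after possibly replacing $K$ by $K^\circ$ it splits as an $\ell_\infty$-sum $H=H_1\oplus_\infty H_2$ over complementary coordinate subspaces $\R^n=E_1\oplus E_2$ with $\dim E_i=n_i\ge1$, so that $H^\circ=H_1^\circ\oplus_1 H_2^\circ$ is an $\ell_1$-sum and $\PP(H)=\frac{n_1!\,n_2!}{n!}\,\PP(H_1)\,\PP(H_2)=\PP(B_\infty^n)$. The natural inductive reduction passes to the four lower-dimensional bodies $K\cap E_i$ and $K|E_i=(K^\circ\cap E_i)^\circ$, all $O(\eps)$-close to $H_i$: the containment $\conv((K^\circ\cap E_1)\cup(K^\circ\cap E_2))\subseteq K^\circ$ gives $|K^\circ|\ge\frac{n_1!\,n_2!}{n!}|K^\circ\cap E_1|\,|K^\circ\cap E_2|$, the inclusion $H\subseteq K$ gives $|K|\ge|H_1|\,|H_2|$, and the inductive hypothesis on the four bodies almost closes the argument. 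The obstruction is that these volume estimates carry multiplicative errors of size $1+O(\eps)$ — the same order as the gain $c(n)\eps$ we want — so a purely soft induction recovers only $\PP(K)\ge(1-O(\eps))\PP(B_\infty^n)$. The content of the proof is therefore to pin down the signs of these $O(\eps)$ corrections and to show that the part of the perturbation of $K$ not absorbed by the $\ell_\infty$-decomposition contributes with a definite positive sign.

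Carrying this out is the main obstacle, and I would do it by a quantitative local analysis extending \cite{NPRZ} from the cube to an arbitrary Hanner polytope along the graph recursion. Following \cite{NPRZ}, one would parametrize $K$ near $H$ by its support function $h_K=h_H+f$ with $f$ small, parametrize $K^\circ$ near $H^\circ$ by its gauge, and take the polar with respect to the Santal\'o point of $K$ (to remove the translational degree of freedom). The reason the bound is linear rather than quadratic in $\eps$ is that $H$ is a polytope: using the graph $G=G(H)$, the perturbation $f$ splits into a part that merely rescales the $\ell_1/\ell_\infty$ blocks of $H$ — tangent to the Hanner ${\rm GL}(n)$-orbit, changing nothing — and a transversal part, namely a tilt of a facet (indexed by a maximal independent set of $\overline G$), a truncation of a vertex (a maximal independent set of $G$), or a splitting of a block unmatched in the polar; each transversal move of size $\eps$ forces a new facet or vertex of $K^\circ$ to protrude by an amount $\sim\eps$, adding to $|K^\circ|$ a thin pyramid of height $\sim\eps$ over a base of bounded area, hence a term of order $\eps$ (not $\eps^2$), which one checks outweighs the at-most-order-$\eps$ loss in $|K|$. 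The technical heart is to make this estimate uniform and additive over the blocks of $G$ — controlling the cross terms between distinct blocks and between $K$ and $K^\circ$ — and to handle the fact that $K$ is only centrally symmetric, not unconditional, by an orthant-by-orthant splitting as in the proof of Lemma~\ref{lem:almost_cube_case}; the recursion on $G$ reduces each block to a lower-dimensional case covered by the inductive hypothesis or by \cite{NPRZ}, and summing the block contributions gives $\PP(K)\ge\PP(B_\infty^n)+c(n)\eps$.
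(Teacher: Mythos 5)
First, a point of orientation: the paper you were given does not prove Theorem \ref{th:kim} at all --- it imports it verbatim from \cite{Ki} as the main external input to Section 3. So your reconstruction is really being measured against the argument of that separate paper. Your framework is the right one: reduce to a standard Hanner polytope $H$ positioned so that $H\subset K\subset(1+\eps)H$, exploit the $\ell_1/\ell_\infty$ recursion and the graph description of $H$, and extend the local analysis of \cite{NPRZ} from the cube to a general Hanner polytope. You also correctly diagnose the central difficulty, namely that a soft induction through the $\ell_\infty$-splitting only returns $\PP(K)\ge(1-O(\eps))\PP(B_\infty^n)$ because every volume comparison carries a multiplicative error of the same order $\eps$ as the gain one is trying to prove.

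The problem is that the proposal stops exactly where the proof begins. The assertion that every ``transversal'' perturbation of size $\eps$ produces a net gain of order $\eps$ in $\PP(K)$ --- that the first-order increase in $|K^\circ|$ caused by a protruding facet or truncated vertex strictly dominates the first-order decrease in $|K|$, and that these gains add over the blocks of $G$ without cancellation against each other or against the tangential directions --- \emph{is} the theorem, and you give no argument for it. Even in the cube case \cite{NPRZ} this is not a one-line pyramid-volume comparison: one must (a) make precise the splitting of the perturbation into a part tangent to the ${\rm GL}(n)$-orbit of $H$ (which must be quotiented out while keeping the transversal part comparable to $\dBM(K,H)-1$), and (b) run a careful bookkeeping over the vertex/facet pairs of $H$ and $H^\circ$ (equivalently over maximal independent sets of $G$ and $\overline{G}$) to rule out cancellation of the first-order terms; doing this for a general Hanner polytope is the technical content of \cite{Ki} and occupies most of that paper. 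As written, your argument has a genuine gap at what you yourself call its ``technical heart.'' A minor further point: for centrally symmetric $K$ the Santal\'o point is the origin, so polarity about the Santal\'o point removes no degrees of freedom here; the normalization you actually need is the linear one just described.
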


Let $\ga_n>0$ be a common threshold of $\eps$ to satisfy both Theorem 1 and Theorem \ref{th:kim}. More precisely, the constant $\ga_n$ satisfies the following:
\begin{itemize}
\item[] If $K$ is a symmetric convex body in $\R^n$ with $\dBM(K,H)=1+\eps$, $\eps\le\ga_n$, for some Hanner polytope $H$, then 
\begin{equation}\label{eq:Kim_stability}
\PP(K)\ge(1+\al_n\eps)\PP(B_\infty^n).
\end{equation}
\item[] If $K$ is an unconditional convex body in $\R^n$ with $\dBM(K,H)\ge1+\eps$, $\eps\le\ga_n$, for every Hanner polytope $H$, then 
\begin{equation}\label{eq:KZ_stability}
\PP(K)\ge(1+\be_n\eps)\PP(B_\infty^n).
\end{equation}
\end{itemize}
where $\al_n$, $\be_n>0$ are constants depending on $n$ only.

\begin{proof}[\bf Proof of Theorem \ref{th:extension}]
We will use $\gamma_n$ to select the constant $\eps_n$, a threshold of $\eps$ in the statement of the Theorem  \ref{th:extension}. Consider a convex symmetric body $K\subset \R^n$. Let $L \subset \R^n$ be an unconditional convex body  with smallest, among unconditional bodies,  Banach-Mazur distance to $K$ and let $\eps \ge 0$ be such that $\dBM(K,L) = 1+ \eps$.  Also consider a Hanner polytope $H_0$ with smallest, among Hanner polytopes,  Banach-Mazur distance to $L$ and let $\delta \ge 0$ be such that  $\dBM(L,H_0)=1+\de$.

We will consider two cases: $\de\le\ga_n/3$ and  $\de>\ga_n/3$.
\begin{enumerate}
\item[1.]  Assume $\de\le\frac{\ga_n}3$ and thus $\dBM(L,H_0)\le\frac{\ga_n}3$. Then
\begin{align*}
\dBM(K,H_0) &\le\dBM(K,L)\dBM(L,H_0)=(1+\eps)(1+\de)\\
&\le1+\eps+(1+\eps)\ga_n/3 \le1+\ga_n/3+2\ga_n/3=1+\ga_n,
\end{align*}
where, to guarantee  the above inequalities, we select $\eps_n  \le \min\{\frac{\gamma_n}{3},1\}$. Thus, by \eqref{eq:Kim_stability}, $$\PP(K)\ge (1+\al_n\eps)\PP(B_\infty^n).$$

\item[2.] Now assume that  $\de>\frac{\ga_n}3$ and thus $\dBM(L,H)>\frac{\ga_n}3$ for every Hanner polytope $H$. We will require
 $\eps_n\le\min\set{\frac{\be_n\ga_n}{12n},\frac1{2n}}$.
Since $L\subset TK\subset(1+\eps)L$ for some $T\in{\rm GL}(n)$, 
\begin{align*}
\PP(K)&=\PP(TK)\ge |L||(1+\eps)^{-1}L^\circ|=(1+\eps)^{-n}\PP(L)\\
&\ge (1-\eps)^n\PP(L)\ge(1-n\eps)\PP(L).
\end{align*}
Moreover, since $\PP(L)\ge (1+\be_n\ga_n/3)\PP(B_\infty^n)$ by \eqref{eq:KZ_stability},
\begin{align*}
\PP(K) &\ge (1-n\eps)\PP(L)\ge(1-n\eps)(1+\be_n\ga_n/3)\,\PP(B_\infty^n)\\
&\ge(1-n\eps)(1+4n\eps)\,\PP(B_\infty^n) =(1+3n\eps-4n^2\eps^2)\,\PP(B_\infty^n)\\
&\ge(1+n\eps)\,\PP(B_\infty^n).
\end{align*}
\end{enumerate}
Finally we combine the above two cases by taking
\begin{align*}
\eps_n&=\min\set{\frac{\ga_n}3,\frac{\be_n\ga_n}{12n},\frac1{2n}}\\
\tau_n&=\min\set{\al_n,n}.
\end{align*}
Then, $\PP(K)\ge(1+\tau_n\eps)\PP(B_\infty^n)$ whenever $\eps\le\eps_n$.
\end{proof}

\end{document}